\newtheorem{defi}{Definition}
\newcommand{\brdef}{\begin{defi}}
\newcommand{\erdef}{\end{defi}}
\newtheorem{prop}{Proposition}
\newcommand{\bprop}{\begin{prop}}
\newcommand{\eprop}{\end{prop}}
\newtheorem{thm}{Theorem}
\newcommand{\bth}{\begin{thm}}
\newcommand{\eth}{\end{thm}}
\newtheorem{cor}{Corollary}
\newcommand{\bcor}{\begin{cor}}
\newcommand{\ecor}{\end{cor}}
\newtheorem{lem}{Lemma}
\newcommand{\ble}{\begin{lem}}
\newcommand{\ele}{\end{lem}}
\newtheorem{exm}{Example}
\newcommand{\bex}{\begin{exm}}
\newcommand{\eex}{\end{exm}}
\numberwithin{equation}{section}
\begin{document}
\title{Ricci flow on modified Riemann Extensions}
\author[ H. G. Nagaraja and Harish D.]{H. G. Nagaraja and
 Harish D.}
\address[H. G. Nagaraja, Harish D.]{Department of Mathematics, Bangalore University, Central College Campus, Bengaluru-560001, INDIA}
\email{hgnraj@yahoo.com}
\email{itsme.harishd@gmail.com}
\footnote{This work is supported by CSIR 09/039(0106)2012-EMR-I}
\subjclass[2010]{53C20, 53C44.}\keywords{Riemann extension, evolution equations}
\maketitle
\begin{abstract}
We study the properties of Modified Riemann extensions evolving under Ricci flow. We obtain the necessary and sufficient condition for modified Riemann extension under Ricci flow to stay as modified Riemann extension. We also discuss the properties of the curvature tensors under Ricci flow.
\end{abstract}

\section{introduction}
Ricci flow and the evolution equations of the Riemannian curvature tensor were initially introduced by  Richard Hamilton\cite{a} and was later studied to a large extent by Perelman\cite{i},\cite{j},\cite{k}, Cao, Zhu\cite{h}, John Morgan, Gang Tian\cite{l} and others. Indeed, the theory of Ricci flow has  been used to prove the geometrization and Poincare conjectures\cite{h}. However not much work has been  done on Ricci flows on Modified Riemann extensions. The Ricci flow equation is the evolution equation $\dfrac{\partial g_{il}}{\partial t}=-2R_{il}$ where $g_{il}$ and $R_{il}$ are the metric and Ricci tensor components respectively.  As flow progresses the metric changes and hence the properties related to it. \par

Patterson and Walker\cite{c} have defined Riemann extensions and showed how a Riemannian structure can be given to the $2n$ dimensional tangent bundle of an $n-$ dimensional manifold with given non-Riemannian structure. Riemann extension  is an embedding of a manifold $M$  in a manifold $M'$, the embedding being carried out in such a way that the geodesic equations are preserved up to the base space. The Riemann extension of Riemannian or non-Riemannian spaces can be constructed with the help of the Christoffel coefficients $\Gamma^i_{jk}$ of corresponding Riemann space or with connection coefficients $\Pi^i_{jk}$ in the case of the space of affine connection\cite{g}. The theory of Riemann extensions has been extensively studied by Afifi\cite{h}. Though the Riemann extensions is rich in geometry, here in our discussions, the modified Riemann extensions  fit naturally in to the frame work. Modified Riemann extensions were introduced in \cite{m} and \cite{n} and their properties we list briefly in the next section.

Here in this paper we discuss some interesting properties satisfied by curvature tensors under the influence of Ricci flow on modified Riemann extensions. We give a brief introduction to modified Riemann Extensions\cite{o} in section 2. In Section 3 we find the rate of change of concircular, conharmonic and conformal curvature tensors under Ricci flow. Ricci flow on modified Riemann extensions are discussed in Section 4.\par

\section{preliminaries}
Let $(M,g)$ be a n-dimensional Riemannian manifold. Then Ricci flow is the evolution of the metric given by 
\begin{equation}\label{ric}
\dfrac{\partial g_{il}}{\partial t}=-2R_{il},
\end{equation}
where $g_{il}$ is the metric component and $R_{il}$ is the component of the Ricci curvature tensor.

 For a time dependent metric under Ricci flow, the evolution equations for Riemann curvature tensor, Ricci tensor and scalar curvature are given by\cite{a}, 
\begin{equation}
\begin{split}
\frac{\partial R_{ijkl}}{\partial t}=&\triangle R_{ijkl}+2(B_{ijkl}-B_{ijlk}-B_{iljk}+B_{ikjl})\\&-g^{pq}(R_{pjkl}R_{qi}+R_{ipkl}R_{qj}+R_{ijpl}R_{qk}+R_{ijkp}R_{ql}),
\end{split}
\end{equation}
\begin{equation}\label{init}
\frac{\partial R_{ij}}{\partial t}=\triangle R_{ij}+2g^{pr}g^{qs}R_{piqj}R_{rs}-2g^{pq}R_{pi}R_{qj}
\end{equation}
and 
\begin{equation}
\frac{\partial R}{\partial t}=\triangle R+2g^{ij}g^{kl}R_{ik}R_{jl},
\end{equation}
where  $B_{ijkl}=g^{pr}g^{qs}R_{piqj}R_{rksl}$ and $R_{ijkl}$, $R_{ij}$, $R$ are the Riemannian curvature tensor, Ricci tensor and scalar curvature respectively.   \\\par
Let $\nabla$ be a torsion-free affine connection of $M$.
The modified Riemann extension of $(M, \nabla)$ is the cotangent bundle $T^*M$ equipped with a metric $\bar{g}$ whose local components given by
\\
$\bar{g}_{ij}=-2 \omega_l \Gamma^l_{ij}+c_{ij},\,\, \bar{g}_{ij^*}=\delta_i^j, \,\,\bar{g}_{i^*j}=\delta_i^j \,\,\textrm{and}\,\, \bar{g}_{i^{*}j^{*}}=0$. \\ \\
 The contravariant components are \\
 $\bar{g}^{ij}=0,\,\, \bar{g}^{ij^*}=\delta_i^j,\,\, \bar{g}^{i^{*}j}=\delta_i^j  \,\,\textrm{and}\,\, \bar{g}^{i^*j^*}=2 \omega_l \Gamma^l_{ij}-c_{ij}$\\ for $i,j$ ranging from $1$ to $n$ and $i^*,j^*$ ranging from $n+1$ to $2n$,\\
 where $\omega_l$ are extended coordinates and $c_{ij}$ is a $(0,2)$ tensor on $M$.\\

The Christophel symbols  are given by,
\begin{equation}
\Gamma^k_{ij}=\frac{1}{2}g^{kl}\left(\frac{\partial}{\partial x^i}g_{jk}+\frac{\partial}{\partial x^j}g_{ik}-\frac{\partial}{\partial x^k}g_{ij}\right).
\end{equation}
We note these results for the extended space,
$\bar{\Gamma}^k_{ij}=\Gamma^k_{ij}$,\\
$\bar{\Gamma}^{k^{*}}_{ij}=\omega_l R^l_kji+\frac{1}{2}(\nabla_i c_{jk}+\nabla_j c_{ik}-\nabla_h c_{ij})$,\\
$\bar{\Gamma}^k_{i^* j}=0, \,\, \bar{\Gamma}^{k^*}_{i^* j}=-\Gamma^i_{jk}, \,\, \bar{\Gamma}^k_{i^* j^*}=0, \,\, \bar{\Gamma}^{k^*}_{i^* j^*}=0$\\
The components of the Riemann curvature tensor of the extended space is given by \\
\begin{equation}
R^l_{ijk}=\frac{\partial}{\partial x^i}\Gamma^l_{jk}-\frac{\partial}{\partial x^j}\Gamma^l_{ik}+\Gamma^l_{im}\Gamma^m_{jk}-\Gamma^l_{jm}\Gamma^m_{ik}.
\end{equation}
For the extended space ,\\
$\bar{R}^i_{jkl}=R^i_{jkl}$,\\
$\bar{R}^{i^{*}}_{jkl}=\omega_a(\nabla_j R^a_{ilk}-\nabla_k R^a_{ilj})+\frac{1}{2}[\nabla_j(\nabla_l c_{ki}-\nabla_i c_{kl})-\nabla_k(\nabla_l c_{ji}-\nabla_i c_{jl})-R^m_{jkl}c_{mi}-R^m_{jki}c_{lm}]$,
$\bar{R}^{i^{*}}_{{j^*}kl}=R^j_{ilk}$,\;
$\bar{R}^{i^{*}}_{j{k^*}l}=-R^k_{ilj}$,\;
 and $\bar{R}^{i^{*}}_{jk{l^*}}=R^l_{kji}$.\\
The others are zero. $i^*,j^*, k^*,l^*$ ranges from $n+1$ to $2n$.
We lower the index in the middle position, to get
\begin{equation}
R_{ijkl}=g_{mk}R^m_{ijl}.
\end{equation}
It may be noted by simple calculation that 
$\bar{R}_{i^*jk^*l}=0$ which we require later on.
Further,
$\bar{R}_{ij}=R_{ij}+R_{ji}$,\;$\bar{R}_{i^*j}=0$ and $\bar{R}_{i^*j^*}=0$.

\section{evolution}
The Ricci flow is given by equation\eqref{ric}. As time progresses the metric evolves and hence the properties depending on the metric change. Under Ricci flow, the rate of change of conformal curvature depends on the difference of conharmonic and Riemannian curvature tensors.
The concircular curvature tensor is given by, 
\begin{equation}\label{conf}
C_{ijkl}=R_{ijkl}-\frac{R}{n(n-1)}[g_{il}g_{jk}-g_{jl}g_{ik}].
\end{equation}
The conharmonic curvature tensor is given by
\begin{equation}\label{conh}
L_{ijkl}=R_{ijkl}-\frac{1}{n-2}[g_{jk}R_{il}+g_{il}R_{jk}-g_{ik}R_{jl}-g_{jl}R_{ik}].
\end{equation}
Under Ricci flow, we give a relation between conformal tensor and conharmonic tensor.

\begin{thm}
For a manifold with non zero scalar curvature under Ricci flow, the rate of change of concircular tensor is related to conharmonic tensor by
\begin{equation}
\frac{\partial }{\partial t}(\frac{C_{ijkl}-R_{ijkl}}{R})=\frac{2(n-2)}{n(n-1)}[R_{ijkl}-L_{ijkl}].
\end{equation}
\end{thm}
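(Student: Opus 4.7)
The strategy is entirely algebraic: the key observation is that the $R$ in the denominator cancels, reducing the time derivative to one of pure metric quantities, which can then be evaluated using the Ricci flow equation \eqref{ric}.

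First, I would rearrange the definition \eqref{conf} of the concircular curvature tensor to obtain
\[
\frac{C_{ijkl}-R_{ijkl}}{R}=-\frac{1}{n(n-1)}\bigl[g_{il}g_{jk}-g_{jl}g_{ik}\bigr].
\]
The hypothesis that the scalar curvature is non-zero is needed precisely so that this division is legitimate. Notice that the right-hand side depends on time only through the metric.

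Second, I would apply $\partial_t$ and substitute $\partial_t g_{il}=-2R_{il}$ term by term (using the product rule on each of the two bilinear pieces $g_{il}g_{jk}$ and $g_{jl}g_{ik}$). This gives
\[
\frac{\partial}{\partial t}\!\left(\frac{C_{ijkl}-R_{ijkl}}{R}\right)=\frac{2}{n(n-1)}\bigl[R_{il}g_{jk}+g_{il}R_{jk}-R_{jl}g_{ik}-g_{jl}R_{ik}\bigr].
\]

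Third, I would read the bracketed combination off the definition \eqref{conh} of the conharmonic tensor, which yields
\[
R_{ijkl}-L_{ijkl}=\frac{1}{n-2}\bigl[g_{jk}R_{il}+g_{il}R_{jk}-g_{ik}R_{jl}-g_{jl}R_{ik}\bigr].
\]
Multiplying by $\tfrac{2(n-2)}{n(n-1)}$ cancels the factor $n-2$ and matches the right-hand side of the previous display, completing the identification.

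There is no serious obstacle here; the proof is a two-line computation once one spots that the $R$-factor multiplying $C_{ijkl}-R_{ijkl}$ is exactly the one sitting in the denominator, so that the time derivative reduces to metric evolution alone. The only thing to be careful about is keeping the four index positions $(i,j,k,l)$ consistent with the sign conventions used in the definitions \eqref{conf} and \eqref{conh}.
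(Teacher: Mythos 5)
Your proposal is correct and follows essentially the same route as the paper: both reduce the statement to the Ricci flow equation applied to the metric factors in \eqref{conf} and then identify the resulting combination $g_{jk}R_{il}+g_{il}R_{jk}-g_{ik}R_{jl}-g_{jl}R_{ik}$ with $(n-2)(R_{ijkl}-L_{ijkl})$ via \eqref{conh}. Your organization is slightly tidier — by cancelling the $R$ in the numerator against the denominator at the outset you never need to track $\partial R/\partial t$, whereas the paper carries that term along and eliminates it at the end by reassembling the quotient rule — but the underlying computation is identical.
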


\begin{proof} Differentiating \eqref{conf}  we get,
\begin{equation}
\begin{split}
\frac{\partial C_{ijkl}}{\partial t}=&\frac{\partial R_{ijkl}}{\partial t}-\frac{1}{n(n-1)}[g_{il}g_{jk}-g_{jl}g_{ik}]\frac{\partial R}{\partial t}\\&-\frac{R}{n(n-1)}\left[\frac{\partial g_{il}}{\partial t}g_{jk}+\frac{\partial g_{jk}}{\partial t}g_{il}-\frac{\partial g_{ik}}{\partial t}g_{jl}-\frac{\partial g_{jl}}{\partial t}g_{ik}\right].
\end{split}
\end{equation}

\begin{equation}\label{star}
\begin{split}
\frac{\partial C_{ijkl}}{\partial t}-\frac{\partial R_{ijkl}}{\partial t}=&-\frac{1}{n(n-1)}[g_{il}g_{jk}-g_{jl}g_{ik}]\frac{\partial R}{\partial t}\\&+\frac{R}{n(n-1)}\left[2R_{il}g_{jk}+2R_{jk}g_{il}-2R_{ik}g_{jl}-2R_{jl}g_{ik}\right].
\end{split}
\end{equation}
But 
\begin{equation}\label{1}
[g_{il}g_{jk}-g_{jl}g_{ik}]=\frac{n(n-1)}{R}(R_{ijkl}-C_{ijkl})
\end{equation}
and 
\begin{equation}\label{2}
R_{il}g_{jk}+R_{jk}g_{il}-R_{ik}g_{jl}-R_{jl}g_{ik}=(n-2)(R_{ijkl}-L_{ijkl}).
\end{equation}

Substituting \eqref{1} and \eqref{2} in \eqref{star} we get

\begin{equation}\label{star}
\begin{split}
\frac{\partial C_{ijkl}}{\partial t}-\frac{\partial R_{ijkl}}{\partial t}=-\frac{1}{R}(R_{ijkl}-C_{ijkl})\frac{\partial R}{\partial t}+\frac{2(n-2)R}{n(n-1)}(R_{ijkl}-L_{ijkl}).
\end{split}
\end{equation}

\begin{equation}\label{star}
\begin{split}
R\frac{\partial }{\partial t}(C_{ijkl}-R_{ijkl})-(C_{ijkl}-R_{ijkl})\frac{\partial R}{\partial t}=\frac{2(n-2)R^2}{n(n-1)}(R_{ijkl}-L_{ijkl}).
\end{split}
\end{equation}

Therefore,
\begin{equation}\label{eq4}
\frac{\partial }{\partial t}(\frac{C_{ijkl}-R_{ijkl}}{R})=\frac{2(n-2)}{n(n-1)}[R_{ijkl}-L_{ijkl}].
\end{equation}

\end{proof}

\begin{exm}
\rm{
Let $M$ be a manifold with a space of constant curvature with $K=\frac{1}{1-n}$. Then evolution of the metric under Ricci flow is given by $g_{ij}(t)=g_{ij}(0)e^{-2t}$ and $R_{ijkl}(t)=R_{ijkl}(0)e^{-4t}$.
Further, $C_{ijkl}-R_{ijkl}=-\frac{R}{n} R_{ijkl}$ and $L_{ijkl}-R_{ijkl}=\frac{2(n-1)}{n-2}R_{ijkl}$.
Substituting this in equation \eqref{eq4} the above result is verified.}
\end{exm}

For a Riemannian manifold the Weyl conformal tensor is given by, 
\begin{equation}\label{weyl}
\begin{split}
W_{ijkl}=&R_{ijkl}-\frac{1}{n-2}(g_{jk}R_{il}-g_{ik}R_{jl}+g_{il}R_{jk}-g_{jl}R_{ik})\\&+\frac{R}{(n-1)(n-2)}(g_{il}g_{jk}-g_{jl}g_{ik}).
\end{split}
\end{equation}

Equations \eqref{conf},\eqref{conh} and \eqref{weyl} can be combined to form,
\begin{equation}\label{thm3}
(W_{ijkl}-L_{ijkl})=-\frac{n}{n-2}(C_{ijkl}-R_{ijkl}).
\end{equation}

\begin{thm} \label{thm2}
 For a n-manifold under Ricci flow,
\begin{equation}
\frac{\partial }{\partial t}\frac{(W_{ijkl}-L_{ijkl})}{R}=\frac{2}{n-1}(L_{ijkl}-R_{ijkl}).
\end{equation}
\end{thm}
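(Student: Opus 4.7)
The plan is to derive this identity almost immediately from Theorem~3.1 together with the algebraic identity (\ref{thm3}), which the paper has already established by combining the definitions of the concircular, conharmonic, and Weyl tensors. No new computation involving the evolution equations for $R_{ijkl}$, $R_{ij}$, $R$, or $g_{ij}$ should be necessary.

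First I would divide the identity (\ref{thm3}) through by the (nonvanishing) scalar curvature $R$ to obtain
\begin{equation*}
\frac{W_{ijkl}-L_{ijkl}}{R} \;=\; -\frac{n}{n-2}\cdot\frac{C_{ijkl}-R_{ijkl}}{R}.
\end{equation*}
Since $-n/(n-2)$ is a time-independent constant, I can differentiate both sides with respect to $t$ and pull the constant through:
\begin{equation*}
\frac{\partial}{\partial t}\frac{W_{ijkl}-L_{ijkl}}{R} \;=\; -\frac{n}{n-2}\cdot\frac{\partial}{\partial t}\!\left(\frac{C_{ijkl}-R_{ijkl}}{R}\right).
\end{equation*}

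Then I would invoke Theorem~3.1, which identifies the right-hand side factor as $\tfrac{2(n-2)}{n(n-1)}(R_{ijkl}-L_{ijkl})$. Substituting this in, the factor of $(n-2)$ cancels with the $-n/(n-2)$, the $n$ cancels, and a sign flip converts $R_{ijkl}-L_{ijkl}$ into $L_{ijkl}-R_{ijkl}$, yielding exactly $\tfrac{2}{n-1}(L_{ijkl}-R_{ijkl})$, as claimed.

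The only mild obstacle is bookkeeping: making sure the sign in (\ref{thm3}) and the sign in Theorem~3.1 combine correctly so that the final right-hand side is $L_{ijkl}-R_{ijkl}$ rather than its negative, and verifying that the hypothesis $R\neq 0$ implicit in Theorem~3.1 is either assumed or can be dropped (if $R\equiv 0$ along the flow, the quotient on the left is not defined and the statement should be read in that vacuous sense). Beyond that the proof is essentially a one-line algebraic substitution.
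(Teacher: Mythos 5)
Your proposal is correct and is essentially the paper's own argument: the paper likewise proves this by differentiating the algebraic relation \eqref{thm3} and substituting the conclusion of the preceding theorem on $\frac{\partial}{\partial t}\bigl((C_{ijkl}-R_{ijkl})/R\bigr)$, with the constant $-n/(n-2)$ producing the stated coefficient $2/(n-1)$ and the sign flip. Your added remark about the implicit hypothesis $R\neq 0$ is a reasonable point of care that the paper glosses over, but the route is the same.
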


\begin{proof}
Differentiating equation \ref{thm3} with respect to 't' and using theorem \ref{thm2} the result follows.
\end{proof}

\section{Extensions}
For modified Riemann Extensions, since the scalar curvature vanishes, the concircular curvature tensor is same as the Riemannian curvature tensor. Further the conharmonic curvature tensor is equal to the conformal curvature tensor. \\
Ricci flow on modified Riemann extensions is the evolution of metric such that the class of metrics obtained under Ricci flow can be expressed as modified Riemann extensions of a base metric. 
We prove the following results for Ricci flow on modified Riemann extensions.

\begin{lem}
Laplacian of Ricci tensor is zero on modified Riemann extension.
\end{lem}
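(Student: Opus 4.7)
The plan is to compute the Laplacian $\Delta\bar{R}_{cd}=\bar{g}^{ab}\bar{\nabla}_a\bar{\nabla}_b\bar{R}_{cd}$ directly, leveraging two structural features already recorded in the preliminaries: the Ricci tensor $\bar{R}$ of the modified Riemann extension has only the purely horizontal block $\bar{R}_{ij}=R_{ij}+R_{ji}$ nonzero, and this block depends only on the base coordinates $x^1,\ldots,x^n$; furthermore the inverse metric $\bar{g}^{ab}$ is supported only on the index pairs $(i,j^*)$, $(i^*,j)$ and $(i^*,j^*)$, so only three contracted second derivatives can contribute to the Laplacian.

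First I would show that the vertical derivative of $\bar{R}$ vanishes identically, i.e.\ $\bar{\nabla}_{j^*}\bar{R}_{cd}=0$ for every pair $c,d$. Since $\bar{R}_{cd}$ is zero as soon as a starred index appears and is $\omega$-independent otherwise, the partial derivative $\partial_{j^*}\bar{R}_{cd}$ vanishes. For the Christoffel corrections the only nonvanishing $\bar{\Gamma}^e_{j^*\cdot}$ component is $\bar{\Gamma}^{k^*}_{j^*i}=-\Gamma^i_{jk}$, and this always multiplies a component of the form $\bar{R}_{k^*\cdot}$, which is zero.

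Next I would compute the horizontal derivative $\bar{\nabla}_j\bar{R}_{cd}$. By the same ``upper star contracts with vanishing Ricci slot'' mechanism, the only surviving Christoffels are the base ones $\bar{\Gamma}^k_{jc}=\Gamma^k_{jc}$, so that $\bar{\nabla}_j\bar{R}_{cd}=\nabla_j R_{cd}+\nabla_j R_{dc}$ for unstarred $c,d$ and zero otherwise. The crucial observation is that in every case the resulting $(0,3)$-tensor $T_{bcd}=\bar{\nabla}_b\bar{R}_{cd}$ still has no dependence on the fibre coordinates $\omega_l$.

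Finally I would apply a further covariant derivative and contract with $\bar{g}^{ab}$, handling the three nonzero inverse-metric blocks in turn. For the $(i,j^*)$ and $(i^*,j^*)$ blocks the inner factor $T_{j^*cd}$ is already zero by Step~1, and the Christoffel corrections generated by the outer $\bar{\nabla}$ are either zero or of the form $\bar{\Gamma}^{k^*}_{\cdot\,\cdot}\,T_{k^*\cdot\cdot}$, which again vanish. For the $(i^*,j)$ block the inner factor $T_{jcd}$ depends only on $x$ so $\partial_{i^*}T_{jcd}=0$, while every correction $\bar{\Gamma}^e_{i^*\cdot}$ is either $\bar{\Gamma}^k_{i^*\cdot}=0$ or of the form $\bar{\Gamma}^{k^*}_{i^*\cdot}$ contracted against $T_{k^*\cdot\cdot}=0$. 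Summing the three contributions yields $\Delta\bar{R}_{cd}=0$. The main obstacle is simply the bookkeeping of Christoffel symbols with mixed starred indices; the calculation collapses because of one recurring pattern, namely that every upper starred Christoffel index eventually contracts against a slot of $\bar{R}$ or $\bar{\nabla}\bar{R}$ whose corresponding index is starred, and all such components are zero.
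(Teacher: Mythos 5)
Your proposal is correct and follows essentially the same route as the paper: a direct coordinate computation of $\bar{g}^{ab}\bar{\nabla}_a\bar{\nabla}_b\bar{R}_{cd}$ in which every term dies because the inverse metric forces a starred index, the Ricci components and their base covariant derivatives vanish whenever a starred slot appears, and the surviving quantities are independent of the fibre coordinates $\omega_l$. Your organization into three steps (vanishing of the vertical derivative, $\omega$-independence of the horizontal derivative, block-by-block contraction) is a cleaner bookkeeping of the same argument the paper carries out in one fully expanded formula.
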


\begin{proof}
Laplacian of Ricci tensor is given by,\\
\begin{equation}\label{lap}
\triangle R_{ij}=g^{kl} R_{ij:k:l}.
\end{equation}
But, 
\begin{equation}
\begin{split}
g^{kl} R_{ij:k:l}=&g^{kl}R_{ij,k,l}-g^{kl}\Gamma^{\alpha}_{jk,l}R_{\alpha i}-g^{kl}\Gamma^{\alpha}_{jk}R_{\alpha i,l}-g^{kl}\Gamma^{\alpha}_{ik,l}R_{\alpha j}\\&-g^{kl}\Gamma^{\alpha}_{ik}R_{\alpha j,l}-g^{kl}\Gamma^{\alpha}_{il}R_{\alpha j,k}+g^{kl}\Gamma^{\alpha}_{il}\Gamma^{\beta}_{k\alpha}R_{\beta j}+g^{kl}\Gamma^{\alpha}_{il}\Gamma^{\beta}_{jk}R_{\beta \alpha}\\&-g^{kl}\Gamma^{\alpha}_{jl}R_{i\alpha,k}+g^{kl}\Gamma^{\alpha}_{jl}\Gamma^{\beta}_{\alpha k}R_{i\beta}+g^{kl}\Gamma^{\alpha}_{jl}\Gamma^{\beta}_{ik}R_{\beta \alpha}-g^{kl}\Gamma^{\alpha}_{kl}R_{ij,\alpha}\\&+g^{kl}\Gamma^{\alpha}_{kl}\Gamma^{\beta}_{i\alpha}R_{\beta j}+g^{kl}\Gamma^{\alpha}_{kl}\Gamma^{\beta}_{j\alpha}R_{i\beta}.
\end{split}
\end{equation}
From the properties of extended metric components we have,
$g^{kl}$ to be non zero atleast one of $k$ or $l$ must be greater than $n$. Suppose $k>n$, then $R_{ij,k}=0$. Also $R_{\alpha i}\neq 0$ only when $\alpha < n$ and $i<n$. But if $\alpha\leq n$ then $\Gamma^{\alpha}_{jk}=0$ since $k>n$. Similar argument makes all the terms on the right side of the equation to vanish. If $l>n$ then again $R_{ij,k,l}$ vanishes since $R_{ij,k}$ is a function of first $n$ coordinates. Also, since Christoffel symbols are preserved by extension, $\Gamma^{\alpha}_{jk,l}$ vanishes. Hence the result.
\end{proof}

\begin{thm}
The Ricci curvature tensor is independent of time for Ricci flow on modified Riemann  extensions.
\end{thm}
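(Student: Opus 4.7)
The plan is to apply the evolution equation~\eqref{init} for the Ricci tensor to the modified Riemann extension metric $\bar g$ and argue that each of the three terms on the right-hand side vanishes identically when evaluated on $\bar g$. Thus I want to show
\[
\frac{\partial \bar R_{ij}}{\partial t}=\triangle \bar R_{ij}+2\bar g^{pr}\bar g^{qs}\bar R_{piqj}\bar R_{rs}-2\bar g^{pq}\bar R_{pi}\bar R_{qj}=0,
\]
for every choice of free indices (starred or unstarred).

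The first term $\triangle \bar R_{ij}$ vanishes by the preceding Lemma. For the linear-looking quadratic term $\bar g^{pq}\bar R_{pi}\bar R_{qj}$, I would split the summation according to whether each of $p,q$ lies in $\{1,\dots,n\}$ or $\{n+1,\dots,2n\}$ and argue case by case. Since $\bar g^{ij}=0$ when both indices are unstarred, at least one of $p,q$ must be starred; but for any starred index $p^*$ (resp.\ $q^*$) one has $\bar R_{p^*i}=0$ (resp.\ $\bar R_{q^*j}=0$) because the only nonvanishing block of the Ricci tensor is $\bar R_{ij}=R_{ij}+R_{ji}$. Hence every contribution to the sum is annihilated.

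For the curvature-squared term $\bar g^{pr}\bar g^{qs}\bar R_{piqj}\bar R_{rs}$, an identical case analysis applies, now in reverse: the factor $\bar R_{rs}$ is nonzero only when both $r$ and $s$ are unstarred, which in turn forces $\bar g^{pr}$ and $\bar g^{qs}$ to contribute only when $p$ and $q$ are both starred. The surviving curvature factor is then $\bar R_{p^*iq^*j}$, which is precisely the tensor component noted in Section~2 to vanish on a modified Riemann extension. Collecting the three vanishings gives $\partial\bar R_{ij}/\partial t=0$ for the only nontrivial block, and the blocks $\bar R_{i^*j},\bar R_{ij^*},\bar R_{i^*j^*}$ are identically zero to begin with (and, assuming the flow preserves the extension structure, remain so). This establishes the theorem.

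The main obstacle is simply the bookkeeping: one must check that no cross-case has been overlooked, and the argument collapses at exactly one point, namely the identity $\bar R_{i^*jk^*l}=0$ flagged in the preliminaries. Without that identity the quadratic curvature term would not vanish, so verifying it is the essential (but already performed) computation.
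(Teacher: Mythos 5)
Your proposal is correct and follows essentially the same route as the paper's own proof: both substitute the modified Riemann extension data into the evolution equation \eqref{init}, kill the Laplacian term via the preceding Lemma, kill $\bar g^{pq}\bar R_{pi}\bar R_{qj}$ because a nonzero metric component forces a starred index into a vanishing Ricci block, and kill the curvature-squared term using the identity $\bar R_{i^*jk^*l}=0$ noted in the preliminaries. The only cosmetic difference is that you organize the case analysis starting from $\bar R_{rs}$ while the paper starts from the metric components; the content is identical.
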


\begin{proof}
Let $M$ be an $n$-dimensional manifold.
The rate of change of Ricci tensor is given by
\begin{equation}
\frac{\partial R_{ik}}{\partial t}=\triangle R_{ik}+2g^{pr}g^{qs}R_{piqk}R_{rs}-2g^{pq}R_{pi}R_{qk}.
\end{equation}
for $i,k$ greater than $n$, $R_{ik}=0$. It is sufficient to prove for $i,k$ ranging from $1$ to $n$. For $g^{pr}$ and $g^{qs}$ to be non zero, either $p>n$ or $r>n$ and $q>n$ or $s>n$. Suppose $p>n$ and $q>n$. Then as discussed earlier $R_{piqk}=0$. If $s>n$ or $r>n$ then $R_{rs}=0$. Thus $2g^{pr}g^{qs}R_{piqk}R_{rs}=0$. Now  $g^{pq}$ is non zero for $p>n$ or $q>n$. But if $p>n$, $R_{pi}=0$ and similarly if $q>n$, $R_{qk}=0$. Hence the result.
\end{proof}

It must be noted here that the flow is not on the base manifold but on the extended space. We have proved the necessary and sufficient conditions for Modified Riemann extension under Ricci flow to stay as modified Riemann extensions. \\\\
We can restate the result in terms of metric. 

\begin{thm}
Ricci flow on modified Riemann Extensions is linear.
\end{thm}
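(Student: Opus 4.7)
The plan is to invoke the previous theorem directly. Since the Ricci flow equation is
\begin{equation}
\frac{\partial \bar g_{il}}{\partial t}=-2\bar R_{il},
\end{equation}
and the preceding theorem asserts that $\bar R_{il}$ is independent of $t$ on a modified Riemann extension, the right-hand side is constant in time. Integrating in $t$ then yields
\begin{equation}
\bar g_{il}(t)=\bar g_{il}(0)-2t\,\bar R_{il},
\end{equation}
which is the sense in which the flow is linear: $\bar g(t)$ is an affine function of $t$.

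I would then verify that this linear evolution is consistent block by block with the modified Riemann extension structure. From the preliminaries, $\bar R_{i^*j}=0$ and $\bar R_{i^*j^*}=0$, so $\bar g_{ij^*}=\delta^j_i$ and $\bar g_{i^*j^*}=0$ are preserved exactly, as required by the definition of a modified Riemann extension. The only nontrivial block is $\bar g_{ij}(t)=-2\omega_l\Gamma^l_{ij}+c_{ij}(0)-2t(R_{ij}+R_{ji})$, using $\bar R_{ij}=R_{ij}+R_{ji}$.

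The key observation is that the base connection $\Gamma^l_{ij}$ (and hence the coefficients $\omega_l\Gamma^l_{ij}$) is untouched by the flow, since the Ricci tensor of the base contributes only through the $c_{ij}$ piece. Defining
\begin{equation}
c_{ij}(t)=c_{ij}(0)-2t\,(R_{ij}+R_{ji}),
\end{equation}
we can write $\bar g_{ij}(t)=-2\omega_l\Gamma^l_{ij}+c_{ij}(t)$, exhibiting $\bar g(t)$ as the modified Riemann extension of $(M,\nabla)$ with the time-dependent $(0,2)$ tensor $c(t)$ on $M$. Thus the family $\bar g(t)$ stays within the class of modified Riemann extensions for all $t$.

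The main obstacle I anticipate is purely conceptual rather than computational: one must be careful that the \emph{form} of the extension is preserved, that is, that the added term $-2t\bar R_{ij}$ does not spoil the split between the connection part $-2\omega_l\Gamma^l_{ij}$ and the tensorial part $c_{ij}$. Because $R_{ij}+R_{ji}$ is a tensor on the base depending only on the first $n$ coordinates and not on the fibre coordinates $\omega_l$, it legitimately redefines only $c_{ij}$, and the affine-in-$t$ formula above is exactly the linear Ricci flow on modified Riemann extensions.
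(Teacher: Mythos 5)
Your proposal is correct and follows essentially the same route as the paper: invoke the preceding theorem that the Ricci tensor is time-invariant and integrate $\partial \bar g_{il}/\partial t=-2\bar R_{il}$ to an affine-in-$t$ metric; your block-by-block check that the result is again a modified Riemann extension with $c_{ij}(t)=c_{ij}(0)-2t(R_{ij}+R_{ji})$ is a welcome addition the paper only asserts implicitly. Note also that your constant $-2t\,\bar R_{il}$ is the correct integration, whereas the paper's displayed solution $g_{jk}(t)=R_{jk}t+g_{jk}(0)$ drops the factor $-2$.
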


\begin{proof}
Under Ricci flow on modified Riemann Extensions, the Ricci tensor is time invariant. Hence on solving \eqref{ric}
we get
\begin{equation}
g_{jk}(t)=R_{jk}t+g_{jk}(0).
\end{equation}
Thus the metric is linearly variying with time.
\end{proof}

\begin{exm}
modified Riemann extension of Schwarzchild metric has vanishing Ricci tensor and hence remains a trivial example.
\end{exm}

\begin{exm}
\rm{
Consider the hyperbolic metric $ds^2=\frac{1}{y^2}dx^2+\frac{1}{y^2}dy^2$. The modified Riemannian extension of this is 
\begin{equation}
ds^2=\frac{-4P}{y}dx^2-\frac{8P}{y}dxdy+\frac{4Q}{y}dy^2+2dxdp+2dQdy.
\end{equation}
where $c_{ij}=0$.\\
 Then $R_{11}=\frac{2}{y^2}=R_{22}$  and rest of the components equal to zero. Thus $g_{11}=\frac{2}{y^2}t+\frac{-4Q}{y}$ and $g_{22}=\frac{-4Q}{y}+\frac{2}{y^2}t$ with rest of the components independent of time which are the required class of  metric components.}
\end{exm}

\begin{thm}
For modified Riemann Extensions under Ricci flow, the rate of change of extended components of Weyl conformal tensor is the same as the rate of change of extended components of Riemann curvature tensor.
\end{thm}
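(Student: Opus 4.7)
My plan is to differentiate the Weyl conformal tensor formula on the $2n$-dimensional extended space and exploit the two key properties established earlier in Section 4: the vanishing of the extended scalar curvature and the time-invariance of the extended Ricci tensor under Ricci flow on modified Riemann extensions.

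First I would write the extended Weyl conformal tensor $\bar W_{abcd}$ using equation \eqref{weyl} with $n$ replaced by $2n$, then apply $\partial_t$ term by term. Three simplifications drive the argument: (i) on any modified Riemann extension $\bar R=0$, because the block structure of $\bar g^{\alpha\beta}$ together with the fact that $\bar R_{\alpha\beta}$ is nonzero only when both indices are base forces $\bar g^{\alpha\beta}\bar R_{\alpha\beta}=0$; so the last group in the Weyl formula, together with its $\partial_t$, drops out. (ii) By the previous theorem $\partial_t \bar R_{ij}=0$, so in the middle group only the $\partial_t \bar g$ factors contribute. (iii) The Ricci flow gives $\partial_t \bar g_{ab}=-2\bar R_{ab}$, converting each surviving piece into a product of two Ricci components.

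Collecting everything, the difference $\partial_t \bar W_{abcd}-\partial_t \bar R_{abcd}$ reduces to a constant multiple of $\bar R_{bc}\bar R_{ad}-\bar R_{ac}\bar R_{bd}$. To conclude, I would invoke the structural fact recorded in Section 2 that $\bar R_{i^*j}=\bar R_{i^*j^*}=0$, i.e.\ $\bar R_{\alpha\beta}$ is nonzero only when both indices are base indices from $1$ to $n$. For any extended component---one in which at least one of $a,b,c,d$ is a starred index---that starred slot appears in one factor of every Ricci product, and thus every such product vanishes. Hence the right-hand side is zero and $\partial_t \bar W_{abcd}=\partial_t \bar R_{abcd}$ for the extended components.

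The only real bookkeeping is the case analysis in the last step: for each of the four index slots, one must check that when it carries a star, both $\bar R_{bc}\bar R_{ad}$ and $\bar R_{ac}\bar R_{bd}$ contain a vanishing Ricci factor. This follows instantly from $\bar R_{i^*j}=\bar R_{i^*j^*}=0$ by inspection (for $a$ or $c$ starred the $\bar R_{a\cdot},\bar R_{\cdot c}$ factors vanish; for $b$ or $d$ starred the $\bar R_{b\cdot},\bar R_{\cdot d}$ factors vanish). No fresh curvature identities are needed beyond what has already been set up.
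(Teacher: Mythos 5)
Your proposal is correct and follows essentially the same route as the paper: drop the scalar-curvature term from the extended Weyl tensor, differentiate, use the time-invariance of the Ricci tensor together with the flow equation $\partial_t \bar g_{ab}=-2\bar R_{ab}$ to reduce the difference to a quadratic Ricci expression, and kill it using $\bar R_{i^*j}=\bar R_{i^*j^*}=0$. If anything, your closing case analysis is slightly sharper than the paper's, since you observe that a single starred index already annihilates both products $\bar R_{bc}\bar R_{ad}$ and $\bar R_{ac}\bar R_{bd}$, whereas the paper only asserts the conclusion when two (or all) of the indices exceed $n$.
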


\begin{proof}
For the extended space, the Weyl conformal tensor is given by
\begin{equation}
W_{ijkl}=R_{ijkl}-\frac{1}{n-2}(g_{ik}R_{jl}-g_{il}R_{jk}-g_{jk}R_{il}+g_{jl}R_{ik}).
\end{equation}
Differentiating partially with respect to  't' we get,
\begin{equation}
\begin{split}
\frac{\partial W_{ijkl}}{\partial t}=\frac{\partial R_{ijkl}}{\partial t}-&\frac{1}{n-2}(\frac{\partial g_{ik}}{\partial t}R_{jl}+g_{ik}\frac{\partial R_{jl}}{\partial t}-\frac{\partial g_{il}}{\partial t}R_{jk}-g_{il}\frac{\partial R_{jk}}{\partial t}\\&-\frac{\partial g_{jk}}{\partial t}R_{il}-g_{jk}\frac{\partial R_{il}}{\partial t}+\frac{g_{jl}}{\partial t}R_{ik}+g_{jl}\frac{\partial R_{ik}}{\partial t}).
\end{split}
\end{equation}
Using previous theorem and \eqref{ric} and rearranging we get
\begin{equation}
\frac{\partial W_{ijkl}}{\partial t}=\frac{\partial R_{ijkl}}{\partial t}-\frac{4}{n-2}(R_{il}R_{jk}-R_{ik}R_{jl}).
\end{equation}
Here again for any two of $i,j,k,l$ greater than $n$ the Ricci components are zero. In particular for all of them greater than $n$, we get the above result.
\end{proof}

\section*{Conclusion}
We have found the necessary and sufficient conditions for the the modified Riemann extension under Ricci flow  evolving to obtain a class of metrics which again are modified Riemann extensions.


\begin{thebibliography}{99}

\bibitem {m} E. Calvi\={n}o-Louzao, E. Garc\'{i}a-R\'{i}o, P. Gilkey and A. Vazquez-Lorenzo, The geometry of modified
Riemannian extensions. Proc. R. Soc. Lond. Ser. A Math. Phys. Eng. Sci. 2009., Vol. 465,
no. 2107, 2023–2040.

\bibitem {n} E. Calvi\={n}o-Louzao, E. Garc\'{i}a-R\'{i}o and R. V\'{a}zquez-Lorenzo, Riemann extensions of torsionfree
connections with degenerate Ricci tensor. Can. J. Math. (2010)., Vol. 62, No. 5, 1037–1057.

\bibitem {o} Aydin Gezer, Lokman Bilen, and Ali Cakmak  Properties of modified
Riemannian extensions. arXiv:1305.4478v2 [math.DG] 26 May 2013.

\bibitem {a} Richard S. Hamilton., Three-manifolds with positive Ricci curvature. J. Differential. Geom., 1982.,Vol 17.,255-306.

\bibitem {c}  Paterson E.M and Walker., Riemann extensions, Quart,J. Math. Oxford, 1952., 3,19-28.

\bibitem {d} A. G. Walker., Canonical form for a Riemannian space with parallel field of null planes, Quart. J. Math. Oxford, 1950.,Vol. 1, 67-69.

\bibitem {e} Luther Pfahler Eisenhart., Fields of Parallel Vectors in Riemannian Space., Annals of Mathematics, 1938, Vol. 39, No. 2,  316-321.

\bibitem {f}  Old\v{r}ich Kowalski and Masami Sekizawa., The Riemann extensions with cyclic parallel Ricci tensor., Math.Nachr., 2014, Vol. 287, No. 8-9, 955-961.

\bibitem {g} V. S. Dryuma Four dimensional Einstein spaces on
six dimensional Ricci flat base spaces.arXiv:gr-qc/0601051, 2006., Vol 12 .No. 1.

\bibitem {h} Z. Afifi, Riemann extension of affine
connected spaces., Quart. J. Math. Oxford 1954.,Vol. 2, 5 , 312-30. 

\bibitem {h} Huai-Dong Cao and Xi-Ping Zhua, complete proof of the Poincare and ´
geometrization conjectures – application of the
Hamilton-Perelman theory of the Ricci flow.,Asian J. Math.,2006., 
Vol. 10, No. 2, 165–492. 

\bibitem {i} Grigory Parelman, The Entropy Formula for the Ricci Flow and its Geometric Applications, arXiv:math/0211159v1 [math.DG] 11 Nov 2002.

\bibitem {j} Grigory Parelman, Ricci Flow with Surgery on Three-Manifolds, arXiv:math/0303109v1 [math.DG] 10 Mar 2003.

\bibitem {k} Grigory Parelman, Finite Extinction Time for the Solutions to the Ricci Flow on certain Three-Manifolds, arXiv:math/0307245v1 [math.DG] 17 Jul 2003. 

\bibitem {l} John Morgan and Gang Tian, Ricci Flow and the Poincaré Conjecture,arXiv:math.DG/0607607 v1 25 Jul 2006.   

\end{thebibliography}
\end{document}